\documentclass{amsart}
\usepackage{amssymb}   
\usepackage{amsmath}
\usepackage{amsthm}
\usepackage[all,dvips]{xy}
\usepackage[]{pstricks,pst-node,pst-plot}
\usepackage{graphicx}
\usepackage{pstcol,pst-char}
\def\a={{\buildrel a \over =}}
\def\na={{\buildrel a \over \neq}}
\def\CC{{\mathbb C}}
\def\PP{{\mathbb P}}

\def\QQ{{\mathbb Q}}

\def\C{{\mathcal C}}
\def\H{{\mathcal H}}
\def\M{{\mathcal M}}

\def\bM{\overline{\M}}

\def\Lg{\Lambda}


\newtheorem{theorem}{Theorem}[section]
\newtheorem{lemma}[theorem]{Lemma}
\newtheorem{proposition}[theorem]{Proposition}

\newtheorem{definition-lemma}[theorem]{Definition-Lemma}

\theoremstyle{definition}

\theoremstyle{remark}

\def\vandaag{\number\day\space\ifcase\month\or
 januari\or februari\or  maart\or  april\or mei\or juni\or  juli\or
 augustus\or  september\or  oktober\or november\or  december\or\fi,
\number\year}
\def\today{\ifcase\month\or
 Jan\or Febr\or  Mar\or  Apr\or May\or Jun\or  Jul\or
 Aug\or  Sep\or  Oct\or Nov\or  Dec\or\fi
 \space\number\day, \number\year}


\begin{document}
\title[The Hodge Bundle on Hurwitz Spaces]{The Hodge Bundle on Hurwitz Spaces}
\author{Gerard van der Geer $\,$}
\address{Korteweg-de Vries Instituut, Universiteit van
Amsterdam, Postbus 94248, 1090 GE Amsterdam,  The Netherlands}
\email{G.B.M.vanderGeer@uva.nl}
\author{$\,$ Alexis Kouvidakis}
\address{Department of Mathematics, University of Crete,
GR-71409 Heraklion, Greece}
\email{kouvid@math.uoc.gr}
\subjclass{14C25,14H40}
\begin{abstract}
In 2009 Kokotov, Korotkin and Zograf gave in \cite{KKZ} a formula for the
class of the Hodge bundle on the Hurwitz space of admissible covers of
genus $g$ and degree $d$ of the projective line. They gave an analytic
proof of it. In this note we give an algebraic proof and an
extension of the result.
\end{abstract}
\maketitle
\hfill{\sl In memoriam Eckart Viehweg}
\begin{section}{Introduction}\label{sec:intro}
Let ${\H}_{g,d}$ be the Hurwitz space of degree $d$ covers of ${\PP}^1$
of genus $g$ and with simple branch points. It parametrizes covers 
$f: C \to {\PP}^1$ with $C$ irreducible, smooth of genus $g$ and $f$ of
degree~$d$ with $b= 2g-2+2d$ simple branch points that are marked. 
This space admits a 
compactification $\overline{\H}_{g,d}$, the space of admissible covers
of genus $g$ and degree~$d$; that is, covers $f: C \to P$,
where $C$ is a nodal curve, $P$ is a stable $b$-pointed 
curve of genus~$0$
and $f$ an admissible cover in the sense of \cite{HMu}, see also \cite{HMo}.

The Hurwitz space $\overline{\H}_{g,d}$ is a coarse moduli space, but it is 
not smooth, not even normal. The boundary $\overline{\H}_{g,d}-{\H}_{g,d}$ 
consists of finitely many divisors $\Delta_{k,\mu}=\Delta_{b-k,\mu}$ 
indexed by a partition $b= k+(b-k)$
with $2 \leq k \leq b-2$ and a `format' $\mu=(m_1,\ldots,m_r)$ where the
$m_i$ are natural numbers with $\sum_{i=1}^r m_i=d$. 
Note that $\Delta_{k,\mu}$ will in general be reducible.
A generic point in $\Delta_{k,\mu}$  
corresponds to the case where $P$ is a curve of genus $0$ 
with two components $P_1$ and $P_2$ intersecting
in one point $Q$, with $P_1$ having $k$ branch points, and 
the inverse image of $Q$ consists of $r$ points
$Q_1,\ldots,Q_r$ with ramification indices $m_1,\ldots,m_r$.

There are natural maps $q:\overline{\H}_{g,d}\to \overline{\M}_{0,b}$
to the moduli space $\overline{\M}_{0,b}$ of stable $b$-pointed curves 
of genus $0$, and $\varphi: \overline{\H}_{g,d} \to \overline{\M}_g$
to the moduli space $\overline{\M}_g$ of stable curves of genus $g$.
These are defined by assigning to $f: C \to P$ the stable curve $P$,
resp.\ the stabilized model of $C$.

The Hurwitz space $\overline{\H}_{g,d}$ carries a natural ${\QQ}-$divisor
class, the Hodge class $\lambda$. It is the pullback of the first Chern 
class of the Hodge bundle on $\overline{\M}_g$. The boundary divisors
$\Delta_{k,\mu}$ define ${\QQ}$-divisor classes which are taken in the
orbifold sense, that is, counted with a weight $1/a$ where $a$ is the
order of the automorphism group of an object corresponding to the
generic point of the boundary divisor, see Section \ref{DivHurwitz}.

The theorem of Kokotov, Korotkin and Zograf expresses the Hodge class
$\lambda$ on $\overline{\H}_{g,d}$ in terms of the boundary divisor 
classes. In terms of the Picard group of the corresponding functor
the theorem reads

\begin{theorem}\label{KKZThm}
The Hodge class $\lambda$ of the functor $\overline{\H}_{g,d}$ is given by
$$
\lambda = \sum_{k=2}^{b/2} m(\mu) \, \left( \frac{k(b-k)}{8(b-1)}
-\frac{1}{12}(d - \sum_{i=1}^r \frac{1}{m_i})\right) \delta_{k,\mu},
$$
where $b=2g+2d-2$ and $\delta_{k,\mu}$ is the class of $\Delta_{k,\mu}$
and $m(\mu)$ is the least common multiple of $m_1,\ldots,m_r$.
\end{theorem}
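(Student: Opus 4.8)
The plan is to compute the Hodge class by restricting to general one-parameter families (test curves) that sweep out each boundary divisor, using the theory of admissible covers to relate the geometry upstairs on $C$ to the geometry downstairs on $P$. The key principle I would exploit is that $\lambda$ on $\overline{\H}_{g,d}$ is pulled back from $\overline{\M}_g$ via $\varphi$, so I can try to evaluate $\lambda$ by understanding how the stabilized curve $C$ degenerates as $P$ acquires a node. Concretely, I would use Mumford's Grothendieck--Riemann--Roch computation on $\overline{\M}_g$, which expresses $12\lambda - \delta = \kappa_1$ (or the version adapted to the total space of the family of curves $C$ over a test base). Since the cover $f:C\to P$ expresses $C$ as a branched cover of a family of genus-$0$ curves, the plan is to push forward the relevant relative dualizing sheaf computation through $f$.

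First I would set up GRR for the universal curve $\pi: \mathcal{C} \to \overline{\H}_{g,d}$, giving $\lambda = \tfrac{1}{12}(\kappa_1 + \delta_{\mathcal{C}})$-type relations, where the right-hand side involves $c_1(\omega_{\pi})^2$ and the node-counting class. The second step is to use the cover $f$ to compute $\omega_{C/P}$ via the Riemann--Hurwitz formula: $\omega_{C} = f^*\omega_{P} \otimes \Ocal(R)$ where $R$ is the ramification divisor supported at the simple branch points (each contributing a length-one ramification) and, in the boundary, at the points $Q_1,\dots,Q_r$ with ramification indices $m_i$. The third step, which produces the two competing terms in the formula, is to separate the contribution of the $b$ moving simple branch points (governed by the map $q$ to $\overline{\M}_{0,b}$, and responsible for the $\tfrac{k(b-k)}{8(b-1)}$ term through the known formula for $\psi$/$\lambda$-classes on $\overline{\M}_{0,b}$) from the contribution of the ramification profile $\mu$ over the node (responsible for the $-\tfrac{1}{12}(d - \sum 1/m_i)$ term, which is exactly the correction in Riemann--Hurwitz from the local monodromy of type $\mu$). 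I would evaluate the first term by intersecting with a test curve in $\overline{\M}_{0,b}$ where $k$ of the marked points collide, using that the relevant $\overline{\M}_{0,b}$ intersection number is $k(b-k)/(b-1)$ up to the standard factor.

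The fourth step is the bookkeeping of orbifold weights: each $\delta_{k,\mu}$ is taken with weight $1/a$ where $a$ is the automorphism order of a generic admissible cover in that stratum, and the least common multiple $m(\mu)$ appearing as $m(\mu)$ in front of the coefficient accounts for the length of the node-smoothing parameter, i.e.\ the local structure of $\overline{\H}_{g,d}$ near $\Delta_{k,\mu}$ is a quotient by a cyclic group whose order is tied to $m(\mu)$. This is where one must be careful to match the stack-theoretic and coarse-space conventions.

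The hard part, I expect, will be the local analysis at the boundary: precisely identifying the ramification contribution $d - \sum_{i=1}^r 1/m_i$ and the multiplicity $m(\mu)$ from the admissible-cover degeneration. When $P$ degenerates to $P_1 \cup_Q P_2$, the curve $C$ degenerates to a nodal curve whose nodes lie over $Q$ with the prescribed profile $\mu$, and the smoothing of $C$ relative to the smoothing of $P$ is $m_i$-to-$1$ at the $i$-th node; correctly tracking how the relative dualizing sheaf and the self-intersection of the vanishing cycles contribute under GRR — including the fractional $1/m_i$ terms and assembling them into $\sum 1/m_i$ — is the delicate computation. I would handle this by a local normal-form analysis of the family $xy = t^{m_i}$ for the cover versus $uv = t$ downstairs, compute the discrepancy in the relative dualizing sheaves, and feed the resulting local contribution into Mumford's formula; reconciling this with the global $\overline{\M}_{0,b}$ term then yields the stated coefficient.
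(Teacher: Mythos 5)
Your proposal is correct and follows essentially the same route as the paper: Mumford's GRR identity $12\lambda = t_*[\omega_t^2] + \delta$ applied to one-parameter test families, the Riemann--Hurwitz relation $\omega_t = f^*\omega_{\pi_{b+1}} + R$ pushed down via $q$ to $\psi$-classes on $\overline{\M}_{0,b}$, and the local normal-form analysis $x_iy_i = \sigma^{m(\mu)/m_i}$ at the nodes producing the $m(\mu)\sum_i 1/m_i$ boundary multiplicity together with the normalization/orbifold bookkeeping of Lemma \ref{normalization}. The only slight imprecision is attributing all of $d - \sum_i 1/m_i$ to the local monodromy over the node: in the paper the $-d$ arises globally from $t_*f^*[\omega_{\pi_{b+1}}^2] = -d\sum_j q^*T_b^j$ (Lemma \ref{omegapisquare}), while only the $\sum_i 1/m_i$ comes from the node-smoothing analysis.
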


Kokotov, Korotkin and Zograf used anaytic tools, 
esp.\ the tau-function,
to construct a trivializing section of the Hodge bundle on ${\H}_{g,d}$
and calculated the vanishing orders of this section along the boundary
divisors.
In our approach we shall apply Grothendieck-Riemann-Roch to the relative
dualizing sheaf on the admissible cover $f: C\to P$ over a base $S$
with structure map $t: C \to S$; this
will provide us with an expression of $\lambda$ in terms of 
$t_{*}{\omega_t}^2$ and boundary divisors and then we will express
$t_{*}{\omega_t}^2$ too in terms of boundary divisors.
In fact, if $f:C/S\to P/S$ is any family of admissible covers of genus $g$ 
and degree $d$ then we have a commutative diagram
\begin{displaymath}
\begin{xy}
\xymatrix{
{\C} \ar[r]^{f} \ar[d]^{t} & \bM_{0,b+1}\ar[d]^{\pi_{b+1}} \\
S \ar[r]^{q} & \bM_{0,b} \\
}
\end{xy}
\end{displaymath}
The relative dualizing sheaf $\omega_t$ of $t$ can be expressed in terms
of the ramification divisor $R$ of the map $f$ and the pullback
of the relative dualizing sheaf $\omega_{\pi_{b+1}}$ of $\pi_{b+1}$.
Working this out will give us our proof.

This proof extends also to other cases. 
As an example we consider the Hurwitz space ${\H}_{g,d,l}$ of 
covers $f: C \to P$ with $C$ smooth of genus $g$ and
$P=({\PP}^1,p_1,\ldots,p_b)$ a $b$-pointed smooth curve of genus $0$, 
$f$ a morphism of degree $d$ 
with simple branch points $p_i$ for $i=2,\ldots,b$ 
and one branch point $p_1$, over which $f$ is \'etale
except for one $l$-fold ramification point. Note that $2g=b-2d+l$.
The dimension of this space is $2g+2d-l-3$.
This space admits a compactification $\overline{\H}_{g,d,l}$ by
so-called $l$-admissible covers where the curve $f:C \to P$ satisfies
the axioms for admissibility except over $p_1$ over which we have exactly
one ramification point of degree $l$, cf.\ \cite{H}, where this 
Hurwitz space was introduced.
Then the expression for $\lambda$ is the following.

\begin{theorem}\label{ExtThm}
The Hodge class of the functor $\overline{\H}_{g,d,l}$  is given by
$$
\begin{aligned}
\lambda = & \sum_{k=2}^{[b/2]} 
\sum_{\mu} m(\mu) \, \left( \frac{k(b-k)}{8(b-1)}
-\frac{1}{12}(d - \sum_{i=1}^r \frac{1}{m_i})\right) \delta_{k,\mu} \\
& \qquad + \frac{(2l+1)(l-2)}{24 \, l}\sum_{k=2}^{b-2} 
\sum_{\mu} m(\mu)\,  \frac{(b-k)(b-1-k)}{(b-1)(b-2)}
\delta_{k,\mu}^{1} \, ,\\
\end{aligned}
$$
where $b=2g+2d-l$ and $\delta_{k,\mu}^1$ the part of $\delta_{k,\mu}$
with general member an admissible cover that maps to a $b$-pointed curve
whose component with the $k$ marked points contains the point $p_1$.
\end{theorem}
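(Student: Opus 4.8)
The plan is to follow the Grothendieck-Riemann-Roch strategy outlined in the introduction, adapted to the presence of the distinguished branch point $p_1$ with its $l$-fold ramification. I would work with a family $f: \C/S \to P/S$ of $l$-admissible covers fitting into the commutative square with $\pi_{b+1}: \bM_{0,b+1} \to \bM_{0,b}$, and apply GRR to the relative dualizing sheaf $\omega_t$ of the structure map $t: \C \to S$. The first step is to express $\omega_t$ in terms of $f^{*}\omega_{\pi_{b+1}}$ and the ramification divisor $R$ of $f$. Away from $p_1$ the map is simply branched, so $R$ contributes the usual simple-ramification terms; over $p_1$ there is a single point of ramification index $l$, contributing a summand with local multiplicity $l-1$. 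Thus $\omega_t = f^{*}\omega_{\pi_{b+1}} + R$ in $\mathrm{Pic}(\C)\otimes\QQ$, with $R$ split as $R = R_{\mathrm{simple}} + R_1$ where $R_1$ records the single $l$-fold point.

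Next I would compute $\lambda = \frac{1}{12}\bigl(t_{*}(\omega_t^2) + \text{boundary}\bigr)$ by GRR, exactly as in the proof of Theorem~\ref{KKZThm}, and then reduce $t_{*}(\omega_t^2)$ to boundary classes. Expanding $\omega_t^2 = f^{*}\omega_{\pi_{b+1}}^2 + 2\,f^{*}\omega_{\pi_{b+1}}\cdot R + R^2$, the first term pushes forward via the diagram to a pullback of a tautological class on $\bM_{0,b}$, which on the genus-zero base is itself expressible in boundary divisors using the known relations on $\bM_{0,b}$. The cross term and the self-intersection $R^2$ require intersection-theoretic input: $f_{*}R$ and the self-intersections of the components of $R$, the latter being controlled by adjunction along the ramification locus and by the formats $\mu$ labelling the boundary divisors. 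The first sum in the stated formula should emerge from the simply-branched part and match Theorem~\ref{KKZThm} verbatim, since away from $p_1$ nothing has changed; the genuinely new contribution is the second sum, which must come entirely from $R_1$.

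The crux, and the step I expect to be the main obstacle, is the precise evaluation of the $R_1$-contribution and its organization into the coefficient $\frac{(2l+1)(l-2)}{24\,l}$ together with the combinatorial weight $\frac{(b-k)(b-1-k)}{(b-1)(b-2)}$. Two things must be pinned down carefully. First, the local analytic model of an $l$-fold ramification point forces the self-intersection $R_1^2$ and the intersection $R_1 \cdot f^{*}\omega_{\pi_{b+1}}$ to carry the rational factor $1/l$ (reflecting the orbifold structure and the automorphism group of order related to $l$ at such a point), which is exactly where the denominator $24\,l$ and the numerator $(2l+1)(l-2)$ should arise after collecting the $\frac{1}{12}$ from GRR with the adjunction-type contribution. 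Second, the restriction to $\delta_{k,\mu}^{1}$—the boundary divisors whose $k$-marked component carries $p_1$—must be justified: only over those boundary points does the distinguished ramification degenerate, so the $R_1$ terms are supported precisely there, and the weight $\frac{(b-k)(b-1-k)}{(b-1)(b-2)}$ should reproduce the probability/combinatorial factor governing how $p_1$ distributes among the marked points, mirroring the $\psi$-class computations on $\bM_{0,b}$.

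Finally I would assemble the two pieces, checking that the $l=1$ case collapses the second sum to zero (indeed $(l-2)$ does not vanish at $l=1$, so the vanishing must instead come from the geometry degenerating to the simple-branch situation where $R_1$ is empty and $b=2g+2d-l$ recovers the old relation), and that the coefficient normalizations agree with the orbifold conventions for $\delta_{k,\mu}$ fixed in Section~\ref{DivHurwitz}. The remaining verification is then routine bookkeeping: substituting the computed self-intersections into the GRR output and separating the $p_1$-dependent terms to land on the displayed formula.
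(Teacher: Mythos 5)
Your overall strategy is exactly the paper's (GRR via Mumford's $12\lambda$ identity, $\omega_t$ expressed through the ramification divisor, pushforward along $t$, and expansion of $\psi$-classes on $\bM_{0,b}$ into boundary divisors), but the step you yourself identify as the crux is resolved incorrectly where you commit to specifics. The factor $1/l$ has nothing to do with orbifold structure or automorphisms at the $l$-fold point, and it does not enter the cross term at all: writing $R_1$ for the \emph{reduced} section of $t$ through the $l$-fold ramification points (so that $\omega_t=f^*\omega_{\pi_{b+1}}+(l-2)R_1+R$ with $R=\sum_{i=1}^b R_i$ reduced), the identity $f\circ\tau_1=s_1\circ q$ gives $t_*(f^*\omega_{\pi_{b+1}}\cdot R_1)=q^*\psi_1$ with no correction factor. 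The $1/l$ enters only through the self-intersection: since $f^*S_{b+1}^{\{1,b+1\}}=lR_1+A_1$ with $A_1\cdot R_1=0$, one gets $R_1^2=\frac{1}{l}\,f^*S_{b+1}^{\{1,b+1\}}\cdot R_1$, and adjunction $s_1^*S_{b+1}^{\{1,b+1\}}=-\psi_1$ yields $t_*(R_1^2)=-\frac{1}{l}\,q^*\psi_1$. Expanding $\omega_t^2$ then requires the bookkeeping $R\cdot R_1=R_1^2$, so the $R_1^2$-terms carry $(l-2)^2+2(l-2)=l(l-2)$, whose $l$ cancels the denominator; collecting everything gives the coefficient $\frac{(2l+1)(l-2)}{2l}$ of $q^*\psi_1$ in $t_*[\omega_t^2]$. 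This computation is the actual substance of the proof, and your proposal defers it entirely while asserting a wrong mechanism ($1/l$ in $R_1\cdot f^*\omega_{\pi_{b+1}}$) for it.

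Your explanation of the restriction to $\delta^1_{k,\mu}$ is also a misconception: $R_1$ is a section of $t$ over all of $S$, not a divisor supported over the boundary, so the claim that ``the $R_1$ terms are supported precisely there'' is false. The restriction arises only afterwards, when the multiple of $q^*\psi_1$ is expanded on $\bM_{0,b}$ via the relation of Farkas--Gibney quoted in Section 2: in the expansion of $\psi_1$ only the divisors $S_b^{\{1\}\cup A}$ occur, i.e.\ exactly those whose $\Lambda$-side contains the label $1$, with coefficient $\frac{(b-k)(b-1-k)}{(b-1)(b-2)}$ when $\#(\{1\}\cup A)=k$; this is also why the second sum in the theorem runs over $k$ from $2$ to $b-2$ rather than to $[b/2]$ --- the coefficient is not symmetric under $k\leftrightarrow b-k$. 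Finally, your consistency check is aimed at the wrong value of $l$: the degeneration to Theorem \ref{KKZThm} occurs at $l=2$, where the factor $(l-2)$ kills the extra term and $b=2g+2d-2$; at $l=1$ the coefficient equals $-\frac{1}{8}\neq 0$, so the second sum does not collapse there, and your attempt to rescue the expected (but incorrect) vanishing by appeal to ``the geometry'' would lead you astray.
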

In \cite{GK} we gave an application of the formula of Kokotov, Korotkin 
and Zograf by calculating an important divisor class on ${\bM}_g$ for
$g$ even.
\end{section}
\begin{section}{Divisors on $\overline{\M}_{0,b}$}
We recall some basic facts about the divisor theory of
$\bM_{0,b}$, see \cite{Ke}. The boundary of $\bM_{0,b}$ is
the union of  irreducible divisors,
each of which corresponds to a decomposition of $B=\{1,\ldots,b\}$
as $B=\Lambda \sqcup \Lambda^c$ into two disjoint
subsets with $2\leq\# \Lg  \leq b-2$.
We write the corresponding divisor as $S_b^{\Lg}$
modulo the relation $S_b^{\Lg}=S_b^{{\Lg}^c}$.
If one wishes one can normalize the $\Lambda$ by requiring that
$$
\# (\Lambda \cap \{1,2,3\}) \leq 1.
$$
With  $\Lg \subset \{1,\ldots, b\}$, the generic element of
the divisor $S_b^\Lg$ represents a stable curve with two rational
components, with the marked points of $\Lambda$ on one component.
The map $\pi_{b+1}: \bM_{0,b+1} \to \bM_{0,b}$ is
equipped with $b$ sections $s_i: \bM_{0,b} \to \bM_{0,b+1}$
with $i=1,\ldots,b$. We can interpret $\overline{M}_{0,b+1}$
as the universal curve over $\overline{M}_{0,b}$.

The boundary divisors of $\bM_{0,b+1}$ are related to those of
$\bM_{0,b}$ as follows:
$$
\pi_{b+1}^*S_b^{\Lg}= S_{b+1}^{\Lg} \cup S_{b+1}^{\Lg \cup \{ b+1 \} },
\eqno(1)
$$
with $\Lg \subset \{1, \ldots, b \}$.
Note that if $\Lambda \subset \{1,\ldots,b\}$ is normalized,
then so are $\Lambda$ and $\Lambda\cup \{ b+1 \}$ as subsets of
$\{1,\ldots,b+1\}$.
So all the boundary components of
$\bM_{0,b+1}$ are coming from $\bM_{0,b}$ except the components
$S_{b+1}^{ \{i, b+1\}}$ ($i=1, \ldots, b$) that correspond to the
image of the $b$ sections $s_i$.
The map $S_{b+1}^\Lg  \to S_b^\Lg$
(resp.\ $S_{b+1}^{\Lg \cup\{b+1\}} \to S_b^\Lg$) is generically a
$\PP ^1$-fibration.

Recall that $B=\{1,\ldots,b\}$.
In $\bM_{0,b}$ we define for $2 \leq j < b/2$
the divisors
$$
T^j_b=\sum_{\Lambda \subset B, \, \# \Lambda =j} S_b^{\Lambda}
\quad
\hbox{\rm and if $b$ is even} \quad
T^{b/2}_b={1\over 2} 
\sum_{\Lambda \subset B, \, \# \Lambda=b/2} S_b^{\Lambda} .
$$
On the moduli space $\overline{\M}_{0,b}$ 
we have the tautological classes $\psi_i$ for $1\leq i \leq b$,
defined as the first Chern class of the line bundle that
associates to a pointed curve $(C,p_1,\ldots,p_b)$ the
cotangent space to $C$ at $p_i$. We put $\psi= \sum_{i=1}^b \psi_i$.

On $\overline{\M}_{0,b}$ we have the relation
$$
\psi_i= \sum_{j=1}^{b-3} \frac{(b-1-j)(b-2-j)}{(b-1)(b-2)}
\sum_{A \subset \{1,\ldots,b\}-\{i\}, \#A=j} S^{\{i\} \cup A}_b,
$$
cf.\ \cite{FG}, Lemma 1 on page 1186. Therefore we get
$$
\psi:=\sum_{i=1}^b \psi_i = \sum_{j=2}^{[b/2]} \frac{(b-j)j}{b-1} T^j_b \, .
\eqno(2)
$$
\end{section}
\begin{section}{Divisors on the Hurwitz Space}\label{DivHurwitz}
In this section we are concerned with divisors on our Hurwitz space.
Recall that the boundary of $\overline{\H}_{g,d}$ consists of finitely
many divisors $\Delta_{k,\mu}=\Delta_{b-k,\mu}$, 
where for each irreducible component of
$\Delta_{k,\mu}$ the generic point corresponds to an admissible cover
$f: C \to P$ of degree $d$ 
with $P$ a stable $b$-pointed genus $0$ curve consisting of
two copies $P_i$ ($i=1,2$) of ${\PP}^1$ with one intersection point $Q$ 
and with $k$ marked points on $P_1$ and
with ramification points $Q_i$ of ramification degree $m_i$ ($i=1,\ldots,r$)
over $Q$. One can decompose these divisors further as $\Delta_{k,\mu}=
\sum \Delta_{\Lambda,\mu}$ corresponding to decompositions
$\{1,\ldots,b\}=\Lambda \sqcup \Lambda^c$.

We start by giving a local description of
admissible covers, cf.\ the discussion in Harris-Mumford \cite{HMu}, p.\
61--62. We then have the diagram
\begin{displaymath}
\begin{xy}
\xymatrix{
& \overline{\M}_{0,b+1} \ar[d]^{\pi_{b+1}}\\
\overline{H}_{g,d} \ar[r]^{q} & \overline{\M}_{0,b}
}
\end{xy}
\end{displaymath}
We now take a general point  $\gamma$ of an irreducible component
of $\Delta_{k,\mu}$.
It corresponds to a general admissible cover $f: C \to P$ with $P$
a stable genus $0$ curve with two components $P_1$ and $P_2$ intersecting
transversally in one point $Q$. The point $\gamma$ maps under $q$ to the 
point of $S_{b}^{\Lambda}$ determined by $P$. 
If $Q_1,\ldots,Q_r$ are the pre-images
of $Q$ under $f$ with ramification format $\mu=(m_1,\ldots,m_r)$
then locally near $Q_i$ the curve $C$ is given by $x_iy_i=s_i$ and locally
near $Q$ the curve $P$ is given 
by $uv=t_1$ with $u=x_i^{m_i}$ and $v=y_i^{m_i}$.
By \cite{HMu} 
the formal neighborhood of $\gamma$ that pro-represents infinitesimal
deformations is given by ${\rm Spec}(R)$ with
$$
R={\CC}[t_1,t_2,\ldots,t_{b-3},s_1,\ldots,s_r]/
\langle s_j^{m_j}=t_1, \, j=1,\ldots,r\rangle \, .
$$
We will use the notation
$$
m(\mu)= \text{least common multiple of } \, m_1,\ldots,m_r \, .
$$

The following fact is known; for the reader's convenience we give a proof.

\begin{lemma}\label{normalization}
The normalization of $\overline{\H}_{g,d}$ has $m_1\cdots m_r/m(\mu)$ 
branches $E_i$ along any irreducible component of $\Delta_{k,\mu}$ 
and the ramification degree of the map $E_i \to \overline{\M}_{0,b}$ is
$m(\mu)$.
\end{lemma}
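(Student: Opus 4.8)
The plan is to work in the explicit local model and reduce the statement to a covering-space computation. First I would eliminate $t_1$ from the ring $R$ using the relations $s_j^{m_j}=t_1$, which exhibits $R$ as $A\otimes_{\CC}\CC[t_2,\dots,t_{b-3}]$, where $A=\CC[s_1,\dots,s_r]/\langle s_i^{m_i}-s_j^{m_j}\rangle$ and the boundary divisor $\Delta_{k,\mu}$ is cut out by $t_1=s_1^{m_1}=0$. Since normalization commutes with the smooth base change by the polynomial factor $\CC[t_2,\dots,t_{b-3}]$, the branches of the normalization of $\overline{\H}_{g,d}$ along the chosen component of $\Delta_{k,\mu}$, together with the ramification of the $E_i\to\overline{\M}_{0,b}$, are determined by the normalization of $V:=\mathrm{Spec}(A)$ and by the finite map $V\to\mathbb{A}^1$ given by $t_1=s_1^{m_1}$.

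Next I would restrict to the complement $V^{\circ}=V\setminus\{t_1=0\}$. Over $t_1\neq 0$ all the $s_i$ are nonzero and each is an $m_i$-th root of the common value $t_1$, so $V^{\circ}$ is smooth and the map $V^{\circ}\to\GG_m$ is finite \'etale of degree $m_1\cdots m_r$. Because $V^{\circ}$ is already smooth, the normalization $\widetilde V\to V$ is an isomorphism over $V^{\circ}$, and $\widetilde V$ is the disjoint union of the smooth closures of the connected components of $V^{\circ}$. Consequently the number of branches $E_i$ along $t_1=0$ equals the number of connected components of the covering $V^{\circ}\to\GG_m$.

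Finally I would compute these components by monodromy. Writing $\mu_n$ for the group of $n$-th roots of unity, the fibre of $V^{\circ}\to\GG_m$ over $t_1=1$ is $\mu_{m_1}\times\cdots\times\mu_{m_r}$, and the monodromy of a small loop about $0$ acts on this fibre by translation by $\sigma=(\zeta_{m_1},\dots,\zeta_{m_r})$, where each $\zeta_{m_i}$ is a primitive $m_i$-th root of unity. The element $\sigma$ has order $\mathrm{lcm}(m_1,\dots,m_r)=m(\mu)$ and translation in a group is free, so the connected components correspond to the $\langle\sigma\rangle$-orbits, of which there are $m_1\cdots m_r/m(\mu)$; this gives the count of branches. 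Each component maps to $\GG_m$ with degree equal to the common orbit size $m(\mu)$ and, being a connected cyclic cover, is modelled on $\tau\mapsto\tau^{m(\mu)}=t_1$; hence its closure $E_i$ is totally ramified over $0$ and the ramification degree of $E_i\to\overline{\M}_{0,b}$ is $m(\mu)$.

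The main point to get right is the monodromy computation together with the verification that $V^{\circ}$ is smooth, so that no normalization occurs away from the boundary and the branches are genuinely indexed by the $\langle\sigma\rangle$-orbits; the reduction through the smooth factor and the identification of the ramification with the cyclic cover $\tau\mapsto\tau^{m(\mu)}$ are then routine.
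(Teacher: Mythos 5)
Your proposal is correct and is in substance the same argument as the paper's: the paper introduces $\tau=t_1^{1/m(\mu)}$, writes $s_j=\zeta_{m_j}\tau^{m(\mu)/m_j}$, and identifies branches under the reparametrization $\tau\mapsto\zeta_{m(\mu)}\tau$, which multiplies the tuple $(\zeta_{m_1},\dots,\zeta_{m_r})$ by $(e^{2\pi i/m_1},\dots,e^{2\pi i/m_r})$ --- precisely your monodromy translation by $\sigma$ on the fibre $\mu_{m_1}\times\cdots\times\mu_{m_r}$, so the orbit count $m_1\cdots m_r/m(\mu)$ and the total ramification of degree $m(\mu)$ coincide step for step. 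The only added value of your covering-space packaging is that the smoothness of $V^{\circ}$ over $\GG_m$ cleanly justifies the paper's unargued assertion that the explicit parametrization ``will give a normalization.''
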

\begin{proof}
To normalize we introduce the local parameter $\tau=t_1^{1/m(\mu)}$ and write
$s_j=\zeta_{m_j}\tau^{m(\mu)/m_j}$ with $\zeta_{m_j}$ an $m_j$th root of unity. 
This will give a normalization. By letting the 
$m(\mu)$th roots of unity act via reparametrizations
$\tau \mapsto \zeta_{m(\mu)} \tau$
we see that $\mu(m)$ solutions give the same branch, leading to
$m_1\cdots m_r/m(\mu)$ branches. 
For each of the branches the ramification index is $m(\mu)$.
\end{proof}

We want to prove a relation in the rational Picard group
${\rm Pic}_{\QQ}(\overline{H}_{g,d})$. Unfortunately, this
group is not known, but it is conjectured that it is
generated by boundary classes, cf.\ \cite{HMo}, p.\ 66, Conj.\ 2.49.

We let $\lambda$ be the Hodge class on $\overline{\H}_{g,d}$. It is defined
functorially by taking for each admissible cover
\begin{displaymath}
\begin{xy}
\xymatrix{
 C \ar[rd]_{\phi} \ar[r]^{f} & P \ar[d]^{}\\
& S \\
}
\end{xy}
\end{displaymath}
the first Chern class of $\phi_* \omega_{C/S}$ with $\omega_{C/S}$
the relative dualizing sheaf.
 
The divisors that we use will be viewed in the orbifold sense;
that is, we will weight an irreducible divisor $D$  with a factor $1/A$
where $A$ is the order of the automorphism group of the object 
corresponding to the generic point of $D$. 
So in a relation 
$\lambda= \sum c_{k,\mu} \delta_{k,\mu}$, like the one of Theorem
\ref{KKZThm}, the class
$\delta_{k,\mu}$ is the sum of the classes of the irreducible components 
of $\Delta_{k,\mu}$ weighted by $1$ over the order of the
automorphism group of the
object corresponding to the generic point of the component.

In order to prove the relation we first work on the normalization 
$\tilde{\H}_{g,d}$ of $\overline{\H}_{g,d}$.
To prove a relation 
$\tilde{\lambda}= \sum \tilde{c}_{k,\mu} \tilde{\delta}_{k,\mu}$
there it suffices to prove 
for every family of admissible covers
over a smooth $1$-dimensional base $S$ 
not contained in the boundary of our Hurwitz space, a relation 
$\lambda_S= \sum \tilde{c}_{k,\mu} \delta_{k,\mu}^{\prime}$ 
with $\lambda_S$ and $\delta^{\prime}_{k,\mu}$ 
the pullbacks under the classifying maps
$S \to \tilde{H}_{g,d}$, cf.\ the discussion on p.\ 141-146
of \cite{HMo}. Then if we descend from the normalization 
$\tilde{\H}_{g,d}$ of $\overline{\H}_{g,d}$
to the space $\overline{\H}_{g,d}$ itself by the normalization map
$\nu: \tilde{\H}_{g,d} \to \overline{\H}_{g,d}$
 we have to take into account that the pushforward of a boundary component 
$\tilde{\Delta}_{k,\mu}$ on $\tilde{\H}_{g,d}$  results in 
a multiple (cf.\ Lemma \ref{normalization})
of the (reduced) cycle $\Delta_{k,\mu}$ on $\overline{\H}_{g,d}$, viz.\
$$
\nu_*[\tilde{\Delta}_{k,\mu}]= 
\frac{m_1\cdots m_r}{m(\mu)} [\Delta_{k,\mu}] \,  \, ,
$$
and this gives rise to a factor 
$m_1\cdots m_r/m(\mu)$ in the coefficients $c_{k,\mu}$.

Using now $1$-dimensional smooth families of admissible curves
we now prove that $\lambda$ is a pullback from ${\bM}_g$.

\begin{lemma}\label{lambda=lambda}
The Hodge bundle on $\overline{\H}_{g,d}$ is the pullback under 
the natural map
$\varphi: \overline{\H}_{g,d} \to {\bM}_g$ of the Hodge bundle on
${\bM}_g$.
\end{lemma}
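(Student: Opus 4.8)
The plan is to describe the map $\varphi$ concretely and then compare the two Hodge bundles directly as sheaves. Over a base $S$, a family of admissible covers $f\colon C\to P$ has source $\phi\colon C\to S$, and $\varphi$ sends it to the stabilized family $p\colon X\to S$, where $X$ is obtained from $C$ by contracting those rational components of the fibres that carry fewer than three special points. By the definition recalled above, the Hodge bundle on $\overline{\H}_{g,d}$ is $\phi_*\omega_{C/S}$, whereas the pullback $\varphi^*$ of the Hodge bundle on $\bM_g$ is $p_*\omega_{X/S}$. Writing $\sigma\colon C\to X$ for the stabilization morphism over $S$, so that $\phi=p\circ\sigma$, it suffices to produce a canonical isomorphism $\phi_*\omega_{C/S}\cong p_*\omega_{X/S}$ that is compatible with base change on $S$.

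The reduction I would make is to the single identity
$$
\sigma_*\omega_{C/S}\cong\omega_{X/S}\,, \qquad \bigl(\text{and } R^1\sigma_*\omega_{C/S}=0\bigr).
$$
Granting this, functoriality of the direct image gives $\phi_*\omega_{C/S}=p_*\sigma_*\omega_{C/S}=p_*\omega_{X/S}$, which is exactly the desired isomorphism; the vanishing of $R^1\sigma_*$ makes the comparison hold at the level of the full derived pushforward, so there is no ambiguity. Since $\sigma$ and the formation of the relative dualizing sheaf both commute with base change, the isomorphism is functorial, and—following the reduction to one-parameter families explained above—this is precisely what is needed to conclude the equality of the two bundles on the Hurwitz space.

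The heart of the matter, and the step I expect to be the main obstacle, is the identity $\sigma_*\omega_{C/S}\cong\omega_{X/S}$, which is a local computation along the contracted locus. The fibres of $\sigma$ are either single points, over which nothing happens, or the unstable rational trees that get collapsed. On a contracted rational tail $E$ (a $\PP^1$ meeting the rest of $C$ in one node) adjunction gives $\omega_{C/S}|_E\cong\Ocal_E(-1)$, which has no sections and no higher cohomology, while on a contracted rational bridge $E$ (meeting the rest in two nodes) one finds $\omega_{C/S}|_E\cong\Ocal_E$, whose unique section is constant and matches across the node produced by the contraction. Assembling these local pictures over an arbitrary contracted genus-$0$ subtree—most cleanly by comparing $\omega_{C/S}$ with $\sigma^*\omega_{X/S}$ on the formal neighbourhood of the contracted curves—yields $\sigma_*\omega_{C/S}=\omega_{X/S}$ together with $R^1\sigma_*\omega_{C/S}=0$. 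This is the standard compatibility of the relative dualizing sheaf with stabilization (cf.\ the discussion of the dualizing sheaf in \cite{HMo}); once it is in place, the lemma follows at once.
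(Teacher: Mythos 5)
Your proposal is correct, and at bottom it rests on the same two local facts as the paper's proof: the relative dualizing sheaf has degree $-1$ on a contracted rational tail (so no sections) and degree $0$ on a contracted bridge (so a one-dimensional space of sections, matched across the new node by the residue condition). But you organize the argument differently. The paper factors the stabilization: it first desingularizes the total space of the family (which does not affect the Hodge bundle), so that each subsequent contraction is a classical blow-down of a $(-1)$-curve on a smooth surface, checked fibrewise via the isomorphism $H^0(X_s,\omega_X)\cong H^0(C'_s,\omega_{C'})$, and it then contracts the residual $(-2)$-configurations using $\beta^*\omega_X=\omega_{X'}$. You instead contract in a single step, proving the sheaf identity $\sigma_*\omega_{C/S}\cong\omega_{X/S}$ together with $R^1\sigma_*\omega_{C/S}=0$ for the direct stabilization $\sigma\colon C\to X$; this is more economical, and the $R^1$-vanishing gives you compatibility with base change at the level of sheaves, a priori stronger than the class-level equality the paper records. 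The one point you should flag explicitly: for a family of admissible covers the total space $C$ is in general singular (local equations of the form $x_iy_i=\sigma^{m(\mu)/m_i}$ at nodes over the boundary), so your ``formal neighbourhood comparison of $\omega_{C/S}$ with $\sigma^*\omega_{X/S}$'' is not a smooth blow-down computation, and contracting a bridge creates a singular point on $X$ as well. The identity $\sigma_*\omega_{C/S}=\omega_{X/S}$ with $R^1\sigma_*\omega_{C/S}=0$ remains true in this generality---your adjunction computations take place on the nodal fibres, where they are insensitive to the singularities of the total space, and one concludes by cohomology and base change (this is the standard compatibility of stabilization with the dualizing sheaf, due to Knudsen; cf.\ \cite{HMo})---but avoiding exactly this wrinkle is why the paper desingularizes first, so you should either cite the general statement or spell out that the fibrewise computation suffices.
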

\begin{proof} 
Given such a family $f: C \to P/S$ of admissible covers
we desingularize the total space of $C$ to get $f^{\prime}: C^{\prime}\to S$.
This does not affect the Hodge bundle, see e.g.\
\cite{HMo}, p.\ 156. Then we stabilize  $C^{\prime}$ to get 
a family $X$ of stable curves over $S$. This is done step by step
by first contracting exceptional curves
in fibres of $f^{\prime}$, say $\alpha : C^{\prime}\to X$ is a contraction. 
But this does not change the Hodge bundle:  one has $\alpha^* \lambda_X=
 \lambda_{C^{\prime}}$; indeed, the degree of $\omega_{C^{\prime}}$
on an exceptional curve is negative, hence the pullback induces an 
isomorphism $H^0(X_s,\omega_X) \cong H^0(C^{\prime}_s,\omega_{C^{\prime}})$.
In this way we arrive at a model $X^{\prime}$ over $S$; we then have to
contract the $(-2)$-configurations in the fibres to get the family
of stable curves $X/S$. As remarked above this does not change 
the relative dualizing sheaf: if $\beta: X^{\prime} \to X$ is the contraction
then $\beta^* \omega_X = \omega_X^{\prime}$. 
\end{proof}
\end{section}
\begin{section}{The Proof}
Let $f:C/S \to P/S$ be any family of admissible covers of genus $g$ 
and degree $d$. Since $\overline{\M}_{0,b+1}$ is the universal 
curve over $\overline{\M}_{0,b}$ we have a commutative diagram 
\begin{displaymath}
\begin{xy}
\xymatrix{
&{\C} \ar[r]^{f} \ar[d]^{t} & \bM_{0,b+1}\ar[d]^{\pi_{b+1}} \\
\overline{\H}_{g,d} & S \ar[l]_{h} \ar[r]^{q} & \bM_{0,b} \\
} 
\end{xy} \eqno(3)
\end{displaymath}
Interpreting the boundary divisor and taking into account the
ramification degree of the normalization (see Lemma \ref{normalization} and
its proof) we obtain the following.
\begin{lemma}\label{qstarT}
We have $q^*(T_b^j)= \sum_{\mu} m(\mu)\, h^* \delta_{j,\mu}$.
\end{lemma}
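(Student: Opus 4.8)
The plan is to reduce the statement to a local multiplicity computation along the boundary, the multiplicity being read off from the ramification degree established in Lemma~\ref{normalization}. By linearity I would first split
$$
q^*(T_b^j) = \sum_{\Lambda \subset B, \, \#\Lambda = j} q^* S_b^\Lambda,
$$
and treat one subset $\Lambda$ with $\#\Lambda = j$ at a time, using the decomposition $\Delta_{j,\mu} = \sum_{\#\Lambda = j} \Delta_{\Lambda,\mu}$ to reassemble the contributions of all such $\Lambda$ into $\delta_{j,\mu}$ at the end.

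Since $S$ is a smooth one-dimensional base not contained in the boundary, the classifying map lifts to $\tilde h : S \to \tilde{\H}_{g,d}$ and $q$ factors as $q = \tilde q \circ \tilde h$ with $\tilde q : \tilde{\H}_{g,d} \to \overline{\M}_{0,b}$. Computing $q^* S_b^\Lambda$ therefore comes down to the local multiplicity of $\tilde q$ along each boundary branch lying over $S_b^\Lambda$. Here I would invoke the explicit local description recalled before Lemma~\ref{normalization}: near a general point of $\Delta_{\Lambda,\mu}$ the divisor $S_b^\Lambda$ is cut out by $t_1 = 0$, while the normalizing parameter is $\tau = t_1^{1/m(\mu)}$, so that $t_1 = \tau^{m(\mu)}$. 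Hence $\tilde q^* S_b^\Lambda = m(\mu)\, \tilde\Delta_{\Lambda,\mu}$ along that branch; equivalently, the pullback multiplicity equals the ramification degree $m(\mu)$ of Lemma~\ref{normalization}. Pulling back by $\tilde h$ and summing over the formats gives $q^* S_b^\Lambda = \sum_\mu m(\mu)\, h^* \delta_{\Lambda,\mu}$, and summing over all $\Lambda$ with $\#\Lambda = j$ yields the asserted relation $q^*(T_b^j) = \sum_\mu m(\mu)\, h^* \delta_{j,\mu}$.

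The genuine content of the argument is the identification of the pullback multiplicity with the ramification degree $m(\mu)$, which is exactly what the local model $R = {\CC}[t_1,\ldots,t_{b-3},s_1,\ldots,s_r]/\langle s_j^{m_j} = t_1\rangle$ and its normalization provide through the relation $t_1 = \tau^{m(\mu)}$. The step requiring the most care is not this computation but the accompanying bookkeeping: since everything is computed on the normalization and only then pulled back to the test curve $S$, one must check that precisely the single factor $m(\mu)$ enters here, the branch-count factor $m_1\cdots m_r/m(\mu)$ of Lemma~\ref{normalization} being reserved for the later descent to $\overline{\H}_{g,d}$. One must also verify that the symmetry $\Lambda \leftrightarrow \Lambda^c$, the identification $\Delta_{k,\mu} = \Delta_{b-k,\mu}$, and the factor $1/2$ built into $T_b^{b/2}$ match up, so that the stated coefficient is correct in the boundary case $j = b/2$ as well.
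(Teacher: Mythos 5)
Your proposal is correct and is essentially the paper's own argument: the paper proves this lemma with a one-line remark invoking Lemma~\ref{normalization}, and what you write out --- the local equation $t_1$ for $S_b^\Lambda$, the normalization relation $t_1=\tau^{m(\mu)}$ giving pullback multiplicity $m(\mu)$, and the bookkeeping over $\Lambda$ and $\mu$ (including the $j=b/2$ case and the reservation of the branch-count factor $m_1\cdots m_r/m(\mu)$ for the later descent) --- is exactly the intended justification, filled in with details the paper leaves implicit.
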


Hereafter we just shall write $\delta_{j,\mu}$ 
for $h^*(\delta_{j,\mu})$ on $S$. This is the divisor class on $S$ given
by the singular curves of type $(j,\mu)$.

The families $C/S$ and $P/S$ have relative dualizing line bundles
$\omega_t$ and $\omega_{\pi_{b+1}}$. 
We let $R$ be the closure of the ramification locus of the map
$f$ restricted to the open part of $S$ which is the pullback under
$S \to {\H}_{g,d}$. Under $f$ this maps to the $b$ sections of the
map $\pi_{b+1}$. In view of the diagram (3) we have
$$
\omega_t=f^*\omega_{\pi_{b+1}}+R \, .
$$
Now $R$ splits as a sum $\sum_{i=1}^b R_i$ with $R_i$ the component of $R$
mapping to the $i$th section $S_{b+1}^{\{i,b+1\}}$. Since the branching 
is simple we thus obtain $b$ sections $\tau_i: S \to {\C}$ of the map $t$.
\begin{lemma}\label{Rsquare}
We have
$t_*(f^*\omega_{\pi_{b+1}}\cdot R)=q^* \psi$ and
$t_*(R^2)=-(1/2) q^* \psi$.
\end{lemma}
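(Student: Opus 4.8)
The plan is to compute both pushforwards by restricting to the ramification sections $\tau_i$ and using the adjunction-type relations that govern how the relative dualizing sheaf and the sections interact. The key observation is that each $R_i$ is the image of a section $\tau_i\colon S\to \C$ of $t$, and over the \'etale locus the map $f$ sends $R_i$ isomorphically onto the $i$th section $S_{b+1}^{\{i,b+1\}}$ of $\pi_{b+1}$. Because the branching is simple, $f$ looks like $x\mapsto x^2$ transversally to $R_i$, so I expect the fundamental relation to be that the pullback of $\psi_i$ (the cotangent line along the $i$th section upstairs) pulls back to $2$ times the corresponding cotangent class along $\tau_i$ downstairs, or equivalently that $f^*\omega_{\pi_{b+1}}$ restricted to $R_i$ differs from $\omega_t|_{R_i}$ by the appropriate ramification contribution.

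For the first identity I would intersect $\omega_t=f^*\omega_{\pi_{b+1}}+R$ with $R$ and take $t_*$. First I would note $t_*(f^*\omega_{\pi_{b+1}}\cdot R)=\sum_i t_*(f^*\omega_{\pi_{b+1}}\cdot R_i)$, and since $R_i=\tau_i(S)$ is a section, $t_*(f^*\omega_{\pi_{b+1}}\cdot R_i)=\tau_i^* f^*\omega_{\pi_{b+1}}=(f\circ\tau_i)^*\omega_{\pi_{b+1}}$. Now $f\circ\tau_i$ is precisely the $i$th section $s_i\colon S\to \overline{\M}_{0,b+1}$ composed with $q$, so $(f\circ\tau_i)^*\omega_{\pi_{b+1}}=q^*(s_i^*\omega_{\pi_{b+1}})=q^*\psi_i$. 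Summing over $i$ gives $t_*(f^*\omega_{\pi_{b+1}}\cdot R)=\sum_i q^*\psi_i=q^*\psi$, which is the first claim.

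For the second identity I would use the self-intersection formula for a section: $t_*(R_i^2)=\tau_i^*(R_i)=\tau_i^*\Ocal(R_i)$, which is the normal bundle of the section, i.e.\ $-\tau_i^*\omega_t$ (the relative cotangent class at the ramification point, with a sign since self-intersection of a section equals minus its relative cotangent line). The point where the factor $-1/2$ enters is the comparison between the cotangent class $\tau_i^*\omega_t$ downstairs and $\psi_i=s_i^*\omega_{\pi_{b+1}}$ upstairs: under the simple (double) ramification $f$, the local coordinate downstairs is a square root of the coordinate upstairs, so $\tau_i^*\omega_t=\tfrac12\, q^*\psi_i$. Feeding this in gives $t_*(R_i^2)=-\tfrac12\,q^*\psi_i$, and summing yields $t_*(R^2)=-\tfrac12\,q^*\psi$.

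The main obstacle I anticipate is making the factor of $2$ between $\tau_i^*\omega_t$ and $q^*\psi_i$ fully rigorous, i.e.\ correctly accounting for the local structure of $f$ near a simple branch point including the behavior at the boundary where sections may collide or pass through nodes. A clean way to see it is to restrict the identity $\omega_t=f^*\omega_{\pi_{b+1}}+R$ to $R_i$ itself: pulling back by $\tau_i$ gives $\tau_i^*\omega_t=q^*\psi_i+\tau_i^*\Ocal(R)$, and since the sections $\tau_i$ are disjoint (simple branching), $\tau_i^*\Ocal(R)=\tau_i^*\Ocal(R_i)=-\tau_i^*\omega_t$; solving $\tau_i^*\omega_t=q^*\psi_i-\tau_i^*\omega_t$ yields $\tau_i^*\omega_t=\tfrac12 q^*\psi_i$ directly, which simultaneously confirms the ramification factor and closes both computations without a separate local coordinate analysis.
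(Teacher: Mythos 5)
Your proof of the first identity is the same as the paper's: both use the projection formula along the sections $\tau_i$ together with $f\circ\tau_i=s_i\circ q$ and $t\circ\tau_i=\mathrm{id}_S$. For the second identity you take a genuinely different, and correct, route. The paper computes $R_i^2$ by pulling back the boundary divisor upstairs: it writes $f^*S_{b+1}^{\{i,b+1\}}=2R_i+A_i$ with $A_i\cdot R_i=0$ (the residual $A_i$ being the unramified preimages of the $i$th section), deduces $R_i^2=\tfrac12\, f^*S_{b+1}^{\{i,b+1\}}\cdot R_i$, and then applies adjunction on $\bM_{0,b+1}$ in the form $s_i^* S_{b+1}^{\{i,b+1\}}=-\psi_i$. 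You instead apply adjunction downstairs on the family $t:\C\to S$, namely $t_*(R_i^2)=\tau_i^*\Ocal(R_i)=-\tau_i^*\omega_t$, and obtain the comparison $\tau_i^*\omega_t=\tfrac12\,q^*\psi_i$ by restricting the global ramification formula $\omega_t=f^*\omega_{\pi_{b+1}}+R$ to $\tau_i$ and using disjointness of the $R_j$; this linear manipulation is a clean substitute for any local coordinate analysis. What each approach buys: your route never needs the structure of the full preimage $f^*S_{b+1}^{\{i,b+1\}}$ (no $A_i$, no $A_i\cdot R_i=0$), but it does need that each $R_i$ lies in the smooth locus of $t$ so that adjunction for a section applies --- which holds, and answers the worry you raise at the end, because the branch points are marked points of a stable $b$-pointed curve, hence disjoint from each other and from the nodes of $P$, and an admissible cover has a unique simple ramification point over each $p_i$; these are exactly the facts the paper also uses when asserting that the $R_i$ are disjoint sections. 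Your byproduct $\tau_i^*\omega_t=\tfrac12\,q^*\psi_i$ is slightly sharper than the lemma, recording the factor of simple branching at the level of line bundles on $S$ rather than only after pushforward. Finally, note that your method adapts verbatim to the $l$-fold ramification point used in the proof of Theorem \ref{ExtThm}: restricting $\omega_t=f^*\omega_{\pi_{b+1}}+(l-1)R_1+\sum_{i=2}^b R_i$ to $\tau_1$ gives $l\,\tau_1^*\omega_t=q^*\psi_1$, hence $t_*(R_1^2)=-\tfrac1l\,q^*\psi_1$, matching the paper's computation via $f^*S_{b+1}^{\{1,b+1\}}=lR_1+A_1$.
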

\begin{proof}
We calculate
$$
\begin{aligned}
t_*(f^*\omega_{\pi_{b+1}} \cdot R)  & =
\sum_{i=1}^b t_*(f^* \omega_{\pi_{b+1}} \cdot R_i)= 
\sum_{i=1}^b t_*(\tau_{i*}\tau_i^{*}f^*\omega_{\pi_{b+1}})\\
& =\sum_{i=1}^b q^* s_i^* \omega_{\pi_{b+1}}
=q^*(\sum_{i=1}^b \psi_i)=q^*\psi\, , \\
\end{aligned}
$$
where we used $f \circ \tau_i=s_i \circ q$ and $t \circ \tau_i={\rm id}_S$.
This proves the first claim.

In order to calculate $t_*(R^2)$ we observe that the $R_i$ are disjoint,
hence $R^2=\sum R_i^2$. If we write
$f^* S_{b+1}^{\{i,b+1\}}=2R_i+A_i$ with $A_i\cdot R_i=0$, we have
$$
R_i^2=\frac{1}{2}(f^* S_{b+1}^{\{i,b+1\}} -A_i)\cdot R_i =
\frac{1}{2} \, f^* S_{b+1}^{\{i,b+1\}}\cdot R_i \, .
$$
Now we use the adjunction formula to observe that
$s_i^* S_{b+1}^{\{i,b+1\}}= -\psi_i$.
So we see
$$
\begin{aligned}
t_*(R^2) &=
\frac{1}{2} \sum_{i=1}^b t_*(f^*S_{b+1}^{\{i,b+1\}} \cdot R_i) =
\frac{1}{2} \sum_{i=1}^b t_*\tau_{i*}\tau_i^* f^* S_{b+1}^{\{i,b+1\}}\\
&=\frac{1}{2} \sum_{i=1}^b q^*s_i^* S_{b+1}^{\{i,b+1\}} =
- \frac{1}{2} q^*\sum_{i=1}^b \psi_i= - \frac{1}{2} q^* \psi \, .
\end{aligned}
$$
\end{proof}
\begin{lemma}\label{omegapisquare}
We have $t_*f^*(\omega_{\pi_{b+1}}^2)= -d \, \sum_{j=2}^{b/2} q^* T_b^j$.
\end{lemma}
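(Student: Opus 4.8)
The plan is to factor the map $f$ of diagram (3) through the target family and then to push the whole computation down to $\bM_{0,b}$, where it becomes a standard tautological identity. Since $\bM_{0,b+1}$ is the universal curve over $\bM_{0,b}$, the target family $P/S$ is canonically the fibre product $P = S\times_{\bM_{0,b}}\bM_{0,b+1}$; I write $\tilde{q}: P\to \bM_{0,b+1}$ and $\pi: P\to S$ for the two projections and $\bar{f}: \C\to P$ for the admissible cover itself, so that the map of diagram (3) is the composite $f=\tilde{q}\circ\bar{f}$ while $t=\pi\circ\bar{f}$. Because the square with vertices $P,\bM_{0,b+1},S,\bM_{0,b}$ is Cartesian and relative dualizing sheaves commute with base change, $\omega_\pi:=\omega_{P/S}=\tilde{q}^*\omega_{\pi_{b+1}}$, and hence $f^*\omega_{\pi_{b+1}}^2=\bar{f}^*\omega_\pi^2$.

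First I would apply the projection formula to $\bar{f}$. As $\bar{f}$ is proper and generically finite of degree $d$ onto $P$, one has $\bar{f}_*[\C]=d\,[P]$, so $\bar{f}_*\bar{f}^*(\omega_\pi^2)=d\,\omega_\pi^2$. Combining this with $t_*=\pi_*\bar{f}_*$ and with base change $\pi_*\tilde{q}^*=q^*\pi_{b+1\,*}$ for the Cartesian square (legitimate since $\pi_{b+1}$ is flat, so the square is Tor-independent, and $q^*$ is the Gysin pullback from the smooth $\bM_{0,b}$), I obtain
$$
t_*f^*(\omega_{\pi_{b+1}}^2)=\pi_*\bar{f}_*\bar{f}^*(\omega_\pi^2)=d\,\pi_*(\omega_\pi^2)=d\,q^*\left(\pi_{b+1\,*}(c_1(\omega_{\pi_{b+1}})^2)\right).
$$
Everything is thus reduced to evaluating the Mumford class $\bar{\kappa}:=\pi_{b+1\,*}\bigl(c_1(\omega_{\pi_{b+1}})^2\bigr)$ on $\bM_{0,b}$ and checking $\bar{\kappa}=-\sum_{j=2}^{b/2}T_b^j$.

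To handle $\bar{\kappa}$ I would use the sections $s_i$ of $\pi_{b+1}$, whose images are $D_i=S_{b+1}^{\{i,b+1\}}$, and the standard comparison $c_1(\omega_{\pi_{b+1}})=\psi_{b+1}-\sum_{i=1}^b D_i$ on $\bM_{0,b+1}$. Since the $D_i$ are disjoint and $\psi_{b+1}\cdot D_i=0$, $s_i^*c_1(\omega_{\pi_{b+1}})=\psi_i$, $D_i^2=s_{i*}(s_i^*D_i)=-s_{i*}\psi_i$, squaring and pushing forward gives $\bar{\kappa}=\pi_{b+1\,*}(\psi_{b+1}^2)-\psi$, i.e. $\bar{\kappa}=\kappa_1-\psi$. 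The one genuinely nontrivial input, and the main obstacle, is the value of this class in genus $0$: here I would invoke that the Hodge class vanishes on $\bM_{0,b}$, so Mumford's relation $12\lambda=\bar{\kappa}+\delta$ forces $\bar{\kappa}=-\delta=-\sum_{j=2}^{b/2}T_b^j$, the total boundary (equivalently $\kappa_1=\psi-\delta$, which together with formula (2) reproduces the same answer and keeps the argument self-contained). Feeding this back into the displayed identity yields $t_*f^*(\omega_{\pi_{b+1}}^2)=-d\sum_{j=2}^{b/2}q^*T_b^j$, as claimed; the factorization and projection-formula bookkeeping in the first two paragraphs are routine once the Cartesian structure of (3) is recognized, so essentially all the content sits in the genus-$0$ vanishing of $\lambda$.
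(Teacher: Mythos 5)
Your proposal is correct, and its skeleton runs parallel to the paper's, but the key input is genuinely different. The paper's proof quotes the Farkas--Gibney identity $\pi_{b+1*}\bigl([\omega_{\pi_{b+1}}(\hat S)]^2\bigr)=\kappa_1=\psi-\sum_j T_b^j$ and then solves for $\pi_{b+1*}[\omega_{\pi_{b+1}}^2]$ by computing the cross terms $\pi_{b+1*}(\omega_{\pi_{b+1}}\cdot\hat S)=\psi$ and $\pi_{b+1*}(\hat S^2)=-\psi$; since $\omega_{\pi_{b+1}}(\hat S)$ has first Chern class $\psi_{b+1}$, this is literally the same expansion you perform with $c_1(\omega_{\pi_{b+1}})=\psi_{b+1}-\sum_i D_i$, just read in the opposite direction. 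Where you diverge is in how the boundary class enters: instead of citing \cite{FG}, you obtain $\pi_{b+1*}(c_1(\omega_{\pi_{b+1}})^2)=-\delta$ directly from Mumford's Grothendieck--Riemann--Roch relation $12\lambda=\bar\kappa+\delta$ applied to the universal genus-$0$ curve, together with the vanishing of $\lambda$ there (the Hodge bundle has rank $g=0$). This is legitimate --- Mumford's computation in \cite{MStab} is insensitive to the marked sections, and the paper itself relies on exactly this transfer when deriving its equation (4) --- and it makes your argument self-contained modulo GRR; note, though, that once you invoke $12\lambda=\bar\kappa+\delta$ the intermediate identity $\bar\kappa=\kappa_1-\psi$ becomes logically superfluous, since the relation applies to $\bar\kappa$ itself. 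A second difference is that you spell out the reduction $t_*f^*=d\,q^*\pi_{b+1*}$, which the paper asserts in one line: your factorization of $f$ through the fibre product $P=S\times_{\bM_{0,b}}\bM_{0,b+1}$, with base change for the dualizing sheaf, the projection formula for the finite degree-$d$ cover $\bar f$, and flat base change $\pi_*\tilde q^*=q^*\pi_{b+1*}$, is a correct and welcome justification of that step (the generic fibre of $\C/S$ being irreducible guarantees $\bar f_*[\C]=d[P]$). In short: same expansion, different sourcing of the genus-$0$ identity $\kappa_1=\psi-\delta$, and more care on the bookkeeping the paper leaves implicit.
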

\begin{proof}
We write  $\hat{S}= \sum_{i=1}^b S_{b+1}^{\{i,b+1\}}$ for the sum of the sections.
Recall from \cite{FG} (Section 2) the identity
$$
\pi_{b+1*}([\omega_{\pi_{b+1}}(\hat{S})]^2)=\kappa_1(\overline{\M}_{0,b})=
\psi-\sum T_b^j \, .
$$
If $\kappa=\omega_{\pi_{b+1}} \cdot \hat{S}$ then $\pi_{b+1*}(\kappa)=\psi$.
Indeed, $\pi_{b+1*}(\kappa)=\sum_{i}\pi_{b+1*}s_{i*}s_i^*\omega_{\pi_{b+1}}=
\sum_i s_i^*\omega_{\pi_{b+1}}=\sum_i\psi_i=\psi$.
Moreover, by the adjunction formula we have $\pi_{b+1*}(\hat{S}^2)=-\psi$.
Therefore we get
$$
\psi-\sum_{j=2}^{b/2} T_b^j = \pi_{b+1*}(\kappa^2)=
\pi_{b+1*}(\omega_{\pi_{b+1}}^2+ 
2 \omega_{\pi_{b+1}}\cdot \hat{S} + \hat{S}^2) 
= \pi_{b+1*}[\omega_{\pi_{b+1}}^2]  +\psi \, ,
$$
hence 
$\pi_{b+1*}[\omega_{\pi_{b+1}}^2]= -\sum_{j=2}^{b/2} T_b^j$.
Now using
$t_*f^*[\omega_{\pi_{b+1}}^2]=d\, q^*\pi_{b+1*}[\omega_{\pi_{b+1}}^2]$
we get the required result.
\end{proof}
\begin{proposition}\label{omega-t-square}
We have the identity
$$
t_*[\omega_t^2]= 
\sum_{j=2}^{b/2} \left( -d+ \frac{3}{2} \frac{j(b-j)}{b-1}\right)
\sum_{\mu} m(\mu) \delta_{j,\mu}.
$$
\end{proposition}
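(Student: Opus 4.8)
The plan is to take the class identity $\omega_t = f^*\omega_{\pi_{b+1}} + R$ established just above Lemma \ref{Rsquare}, square it in the rational Chow ring of $\C$, and apply the proper pushforward $t_*$. Expanding the square gives
$$
\omega_t^2 = f^*(\omega_{\pi_{b+1}}^2) + 2\, f^*\omega_{\pi_{b+1}}\cdot R + R^2 ,
$$
so that by linearity
$$
t_*[\omega_t^2] = t_*f^*(\omega_{\pi_{b+1}}^2) + 2\, t_*(f^*\omega_{\pi_{b+1}}\cdot R) + t_*(R^2).
$$
Each of the three terms on the right has already been computed: the first is Lemma \ref{omegapisquare}, while the mixed term and the self-intersection term are the two formulas of Lemma \ref{Rsquare}. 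The proposition is therefore essentially an assembly of the preceding lemmas, and I do not expect any genuine obstacle beyond the final bookkeeping.

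Substituting the three lemmas I would obtain
$$
t_*[\omega_t^2] = -d\sum_{j=2}^{b/2} q^*T_b^j + 2\, q^*\psi - \tfrac{1}{2}\, q^*\psi
= -d\sum_{j=2}^{b/2} q^*T_b^j + \tfrac{3}{2}\, q^*\psi .
$$
The next step is to eliminate $\psi$ using relation $(2)$, namely $\psi = \sum_{j=2}^{[b/2]} \frac{(b-j)j}{b-1}\, T_b^j$, pulled back by $q$. This collapses the right-hand side into a single sum over $j$:
$$
t_*[\omega_t^2] = \sum_{j=2}^{b/2}\left(-d + \tfrac{3}{2}\,\frac{j(b-j)}{b-1}\right) q^*T_b^j .
$$

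Finally, I would rewrite each $q^*T_b^j$ on the Hurwitz side via Lemma \ref{qstarT}, which gives $q^*T_b^j = \sum_{\mu} m(\mu)\,\delta_{j,\mu}$ (with our convention of writing $\delta_{j,\mu}$ for $h^*\delta_{j,\mu}$ on $S$). Inserting this yields exactly
$$
t_*[\omega_t^2] = \sum_{j=2}^{b/2}\left(-d + \tfrac{3}{2}\,\frac{j(b-j)}{b-1}\right) \sum_{\mu} m(\mu)\,\delta_{j,\mu},
$$
which is the claimed identity. The only point requiring a little care is matching the summation ranges and the factor $1/2$ hidden in the definition of $T_b^{b/2}$ when $b$ is even; since that convention is already built consistently into both relation $(2)$ and Lemma \ref{qstarT}, the coefficients propagate without change.
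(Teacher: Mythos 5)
Your proposal is correct and is exactly the paper's own argument: the paper's proof consists of expanding $t_*[\omega_t^2]=t_*(f^*\omega_{\pi_{b+1}}^2+2f^*\omega_{\pi_{b+1}}\cdot R+R^2)$ and citing Lemmas \ref{Rsquare} and \ref{omegapisquare} together with relation $(2)$ and Lemma \ref{qstarT}. Your version merely writes out the bookkeeping (including the consistent $\tfrac{1}{2}$-convention for $T_b^{b/2}$) that the paper leaves implicit, and the arithmetic $2q^*\psi-\tfrac{1}{2}q^*\psi=\tfrac{3}{2}q^*\psi$ checks out.
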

\begin{proof}
We have $t_*[\omega_t^2]=t_*(f^*\omega_{\pi_{b+1}}^2+2f^*\omega_{\pi_{b+1}}
\cdot R +R^2)$ and the result now follows from lemmas \ref{Rsquare} and 
\ref{omegapisquare} and the formula for $\psi$ given in (2).
\end{proof}

We now prove Theorem \ref{KKZThm}.
In \cite{MStab} Mumford proved the identity $12 \lambda = \kappa_1 +\delta$ 
on the moduli space $\overline{\M}_g$. The same approach,
applying Grothendieck-Riemann-Roch to the morphism $t$ and the relative 
dualizing sheaf, works for 
our admissible curve $C/S$ and the proof of Theorem 5.10 
of loc.\ cit.\ can be transferred almost 
verbatim (taking into account the arguments
in the proof of Lemma \ref{lambda=lambda}) to give the identity 
$$
12 \lambda = t_*[\omega_t^2] + \sum_{j,\mu}m(\mu) \sum_{i=1}^r 
\frac{1}{m_i} \delta_{j,\mu}.
\eqno(4)
$$
on $S$. Indeed, to get the multiplicity of $\delta_{j,\mu}$ we observe 
the following.
Let $f: C/S \to P/S$ be a family of admissible covers over a 
smooth $1$-dimensional
and such that the general fibre is a smooth curve.
We may assume that the image of $S$ in $\overline{\H}_{g,d}$
intersects the divisors $\Delta_{j,\mu}$ only in sufficiently general points
(omitting codimension $2$ loci where boundary divisors intersect). 
Let $s \in S$ correspond to an admissible
cover $C_s \to P_s$ with $P_s$ consisting of two components meeting in $Q$
with local equation $uv=t_1$
and $Q_1,\ldots,Q_r$ the points lying over $Q$ with ramification format
$\mu=(m_1,\ldots,m_r)$ and with local equations $x_iy_i=s_i$ and $s_i^{m_i}=t_1$
and $u=x_i^{m_i}$, $v=y_i^{m_i}$. Then when we pull back to $S$ we have
because the normalization map on $\overline{\H}_{g,d}$ is ramified of degree
$m(\mu)$, a change of coordinates $t_1=\sigma^{m(\mu)}$ which leads to
local equations around $Q_i$ of the form $x_iy_i=\sigma^{m(\mu) /m_i}$.
Therefore the contribution of the $Q_i$'s to the singularity locus is
$m(\mu) \sum_{i=1}^r 1/m_i$.
Substituting the expression for $t_*[\omega_t^2]$ from Proposition
\ref{omega-t-square} in equation (4) yields the result.

The proof of Theorem \ref{ExtThm} goes in a similar fashion.
We consider a diagram
\begin{displaymath}
\begin{xy}
\xymatrix{
&{\C} \ar[r]^{f} \ar[d]^{t} & \bM_{0,b+1}\ar[d]^{\pi_{b+1}} \\
\overline{\H}_{g,d,l} & S \ar[l]_{h} \ar[r]^{q} & \bM_{0,b} \\
}
\end{xy}
\end{displaymath}
The analogue of Lemma \ref{qstarT} holds and we have the formula 
$$
\omega_t= f^*\omega_{\pi_{b+1}}+(l-1)R_1+\sum_{i=2}^b R_i=
 f^*\omega_{\pi_{b+1}}+ (l-2)R_1+R\, .
$$
The analogue of Lemma \ref{Rsquare} says that
$t_*(f^*\omega_{\pi_{b+1}}\cdot R_i)=q^* \psi_i$ and
$t_*(R^2)=-(1/l)q^*\psi_1-(1/2) q^* \sum_{i=2}^b\psi_i$.
Indeed, from $f^* S_{b+1}^{\{1,b+1\}}= lR_1 +A_1$ with $A_1\cdot R_1=0$
we deduce
$$
R_1^2=\frac{1}{l}(f^*S_{b+1}^{\{1,b+1\}} -A_1)\cdot R_1
= \frac{1}{l}f^*S_{b+1}^{\{1,b+1\}} \cdot R_1 \, ,
$$
while for $i=2,\ldots,b$ we find as above 
$R_i^2= \frac{1}{2} f^* S_{b+1}^{\{i,b+1\}} \cdot R_i$,
hence
$$
t_*(R^2)=\sum_{i=1}^b t_*(R_i^2) =
-\frac{1}{l}\psi_1-\frac{1}{2}\sum_{i=2}^b \psi_i \, .
$$
Then we get as formula for $t_*[\omega_t^2]$
$$
t_*[\omega_t^2]= -d \, \sum_{j=2}^{[b/2]} q^* T_b^j + 
\frac{(2l+1)(l-2)}{2l}\,  \psi_1 
+\frac{3}{2}\,  \psi \, .
$$
We now substitute again the formula (4) and work out the pullback of $\psi_1$
that gives us the extra term in the formula for $\lambda$.
This completes the proof of Theorem \ref{ExtThm}.
\end{section}


\begin{thebibliography}{9999}

\bibitem{FG} G.\ Farkas, A.\ Gibney: 
The Mori cone of moduli spaces of pointed curves of small genus
{\sl Trans.\ A.M.S. \bf 355} (2002), 1183-1199.

\bibitem{GK} G.\ van der Geer, A.\ Kouvidakis:
The class of a Hurwitz divisor on the moduli of curves of even genus.
{\tt arXiv:1005.0969v2}.

\bibitem{H} J.\ Harris: On the Kodaira dimension of the moduli space of curves, II. The even genus case. {\sl Invent.\ Math.\ \bf 75} (1984), p.\ 437--466.

\bibitem{HMo} J.\ Harris, I.\ Morrison: Moduli of Curves.
Graduate Texts in Mathematics {\bf 187}, 1998, Springer.

\bibitem{HMu} J.\ Harris, D.\ Mumford: On the Kodaira dimension of
the moduli space of curves. {\sl Inventiones Mathematicae \bf 67},
(1982), 23-86.

\bibitem{Ke}  S.\ Keel:
Intersection theory of moduli space of stable N-pointed curves of genus zero.
{\sl  Trans.\ A.M.S. \bf 330}, 1992, 545--574.

\bibitem{KKZ} A.\ Kokotov, D.\ Korotkin, P.\  Zograf:
Isomonodromic tau function on the  space of admissible covers.
{\tt arXiv:0912.3909v3}.

\bibitem{MStab} D.\ Mumford: Stability of projective varieties. 
{\sl L'Enseignement Math\'ematique \bf XXIII} (1977), p.\ 39--110.
\end{thebibliography}
\end{document}